\def\tr{\triangleright}
\newtheorem{theorem}{Theorem}
\newtheorem{definition}{Definition}
\newtheorem{lemma}[theorem]{Lemma}
\newtheorem{proposition}[theorem]{Proposition}
\newtheorem{corollary}[theorem]{Corollary}
\newtheorem{example}{Example}
\newtheorem{remark}[example]{Remark}
\begin{document}

\date{}

\title{Link invariants from finite racks}

\author{Sam Nelson\\
Department of Mathematical Sciences\\ 
Claremont McKenna College \\
850 Colubmia Ave., Claremont, CA 91711 \\
Email address: knots@esotericka.org}

\maketitle

\renewcommand{\thefootnote}{}

\footnote{2010 \emph{Mathematics Subject Classification}: 
Primary 57M25; Secondary 57M27.}

\footnote{\emph{Key words and phrases}: Finite racks, rack homology, 
2-cocycle invariants, knot and link invariants.}

\renewcommand{\thefootnote}{\arabic{footnote}}
\setcounter{footnote}{0}

\begin{abstract}
We define ambient isotopy invariants of oriented knots and links using the 
counting invariants of framed links defined by finite racks. These
invariants reduce to the usual quandle counting invariant when the
rack in question is a quandle. We are able to further enhance these
counting invariants with 2-cocycles from the coloring rack's second rack
cohomology satisfying a new degeneracy condition which reduces to the
usual case for quandles.
\end{abstract}

\section{Introduction}

A \textit{rack} is a generally non-associative algebraic structure 
whose axioms correspond to blackboard-framed isotopy moves on link diagrams. 
Racks generalize \textit{quandles}, an algebraic structure whose axioms 
correspond to the three Reidemeister moves which combinatorially encode
ambient isotopy of knot diagrams.

Given a finite quandle $T$, the set of quandle homomorphisms from
a knot quandle $Q(K)$ to $T$ gives us an easily computed knot 
invariant, namely its cardinality $|\mathrm{Hom}(Q(K),T)|$. This is
the \textit{quandle counting invariant}, also sometimes called the
\textit{quandle coloring invariant} since each homomorphism $f:Q(K)\to T$ 
can be pictured as a ``coloring'' of the knot diagram assigning to 
each arc $x_i$ in a knot diagram the element $f(x_i)\in T$ such that 
a quandle coloring condition is satisfied at every 
crossing. Indeed, Fox 3-coloring is the simplest non-trivial example
of a quandle coloring invariant for knots.

If $T$ is a non-quandle rack, the set of colorings of arcs of a link
diagram by elements of $T$ satisfying the coloring condition at every 
crossing is invariant only under blackboard-framed isotopy -- type I 
Reidemeister moves which change the framing of the knot also change 
the number of colorings.  In this paper we will exploit a property
of finite coloring racks to define computable invariants
of ambient isotopy of knots and links incorporating these framed isotopy 
coloring invariants. The usual quandle coloring invariants
then form a special case of these more general rack coloring invariants.

The paper is organized as follows. In section \ref{fr} we review the
basics of racks, framed links and virtual links. In section \ref{cinv} we 
define finite rack based counting invariants and give some examples. In 
particular, we show that the writhe-enhanced invariant specializes 
to the integral invariant and contains more information.  
In section \ref{rh} we enhance the rack counting invariants with 2-cocycles 
in the style of \cite{CJKLS}. We provide an example showing that the 
cocycle-enhanced invariant contains more information than the writhe-enhanced
and integral rack counting invariants. In section \ref{q} we collect 
questions for future research.

\section{Basic definitions}\label{fr}

In this section we review the basic definitions we will need for the
remainder of the paper. 

\subsection{Racks}

We begin with a definition from \cite{FR}.

\begin{definition}
\textup{A \textit{rack} is a set $R$ with a binary operation 
$\tr:R\times R\to R$ satisfying 
\begin{list}{}{}
\item[(i)]{for all $x\in R$, the map $f_x:R\to R$ defined by
$f_x(y)=y\tr x$ is invertible, with inverse $f^{-1}_x(y)$ denoted
$y\tr^{-1}x$, and}
\item[(ii)]{for all $x,y,z\in R$, we have 
$(x\tr y)\tr z=(x\tr z) \tr (y\tr z)$.}
\end{list}
A rack in which $x\tr x= x$ for all $x\in R$ is a \textit{quandle.} The
operation $\tr^{-1}$ is the \textit{dual} rack operation -- it is also
self-distributive, and the two operations are mutually distributive.
Note that in \cite{FR}, $x\tr y$ is denoted $x^y$ and $x\tr ^{-1} y$ is
denoted $x^{\overline{y}}$.}
\end{definition}

\[\includegraphics{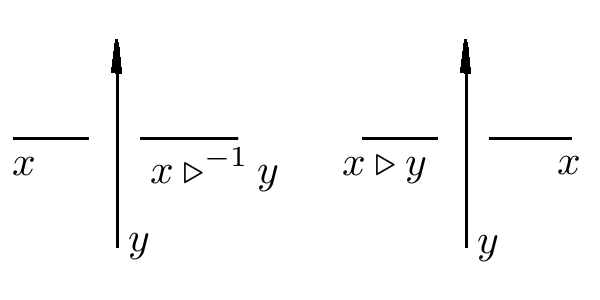}\] 

The rack axioms correspond to Reidemeister moves II and III where
we think of rack elements as arcs in an oriented link diagram and $\tr$ 
means crossing under from right to left when looking in the positive
direction of the overcrossing strand. The dual operation $\tr^{-1}$ can 
then be interpreted as crossing under from left to right.

\[\scalebox{1.2}{\includegraphics{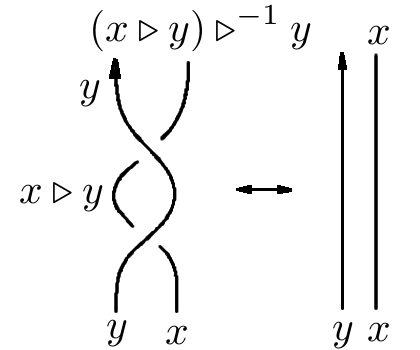}} \quad 
\scalebox{1.1}{\includegraphics{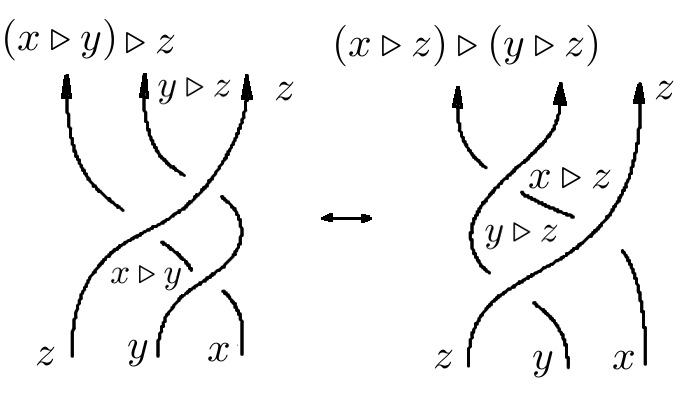}}\]

\begin{example}
\textup{Perhaps the simplest example of a rack structure on a finite set
$R=\{x_1,\dots,x_n\}$ is the \textit{constant action rack} or 
\textit{permutation rack} on $R$ associated to a permutation $\sigma\in S_n$. 
Specifically, set}
\[x_i\tr x_j = x_{\sigma(i)}\]
\textup{for all $i=1,\dots,n$; then the action of $y\in R$ on $R$ remains
constant as $y$ varies. It is easy to verify that this definition
gives us a rack structure, since $x_i\tr^{-1} x_j = x_{\sigma^{-1}(i)}$
and we have}
\[(x_i\tr x_j) \tr x_k = x_{\sigma^2(i)} = (x_i\tr x_k) \tr (x_j\tr x_k).\]
\textup{If a constant action rack is a quandle, then we have $x\tr x=x$ and
consequentially $x\tr y=x$ for all $x,y\in R$; such a quandle is called
\textit{trivial}. There is one trivial quandle for each cardinality
$n$, denoted $T_n$. We will denote the constant action rack associated
to $\sigma\in S_n$ by $T_{\sigma}$.}
\end{example}

\begin{example}
\textup{A simple example of a nontrivial rack structure from \cite{FR}
is the \textit{$(t,s)$-rack} structure: let $\ddot{\Lambda}$ be the ring
$\mathbb{Z}[t,t^{-1},s]$ modulo the ideal generated by $s^2-(1-t)s$. 
Then any $\ddot{\Lambda}$-module $M$ is a rack under the operation}
\[x\tr y = tx + sy.\]
\textup{For instance, we can take $M=\mathbb{Z}_n$ and choose $t,s\in M$
such that $\mathrm{gcd}(n,t)=1$ and $s^2=(1-t)s$, e.g. $M=\mathbb{Z}_8$ 
with $t=3$ and $s=2$. If $s=1-t$ then $M$ is a quandle, known as an 
\textit{Alexander quandle}.}
\end{example}

One useful way to describe a rack operation $\tr$ on a finite set 
$\{x_1,\dots, x_n\}$
is to encode its operation table as a matrix $M$ whose entry in row $i$
column $j$ is $k$ where $x_k=x_i\tr x_j$. Thus, the constant action rack
on $R=\{x_1,x_2,x_3\}$ defined by $\sigma = (123)$ has matrix
\[M_{(123)}=
\left[\begin{array}{ccc}
2 & 2 & 2 \\
3 & 3 & 3 \\
1 & 1 & 1
\end{array}\right]\]
and the $(t,s)$-rack $M=\mathbb{Z}_8$ with $t=3$ and $s=2$ has
rack matrix
\[
M_{(\mathbb{Z}_8,3,2)}=
\left[\begin{array}{cccccccc}
5& 7& 1& 3& 5& 7& 1& 3 \\ 
8& 2& 4& 6& 8& 2& 4& 6 \\ 
3& 5& 7& 1& 3& 5& 7& 1 \\
6& 8& 2& 4& 6& 8& 2& 4 \\
1& 3& 5& 7& 1& 3& 5& 7 \\
4& 6& 8& 2& 4& 6& 8& 2 \\
7& 1& 3& 5& 7& 1& 3& 5 \\
2& 4& 6& 8& 2& 4& 6& 8
\end{array}\right].
\]
Rack axiom (i) requires the columns of a rack matrix to be
permutations. See \cite{HN} for more.

\subsection{Framed links}

Recall that a \textit{framed link} is a link $L$ with a choice of 
\textit{framing curve} $F_i$ for every component $C_i$ of $L$, i.e. 
$F_i$ is a longitude of a regular neighborhood of $C_i$. Framing 
curves are determined up to isotopy by their linking numbers with 
$C_i$. In terms of diagrams, we can bestow a canonical framing on 
every component of a link via the \textit{blackboard framing}, i.e. 
drawing a framing curve for each $C_i$ parallel to $C_i$. This gives
a framing with linking number given by the \textit{writhe}
$w(C_i)=\sum_{x\in S_i} \mathrm{sign}(x)$ 
where $S_i$ is the set of crossings where $C_i$ crosses itself,
$\mathrm{sign}\left(\raisebox{-0.05in}{\scalebox{0.5}{\includegraphics{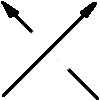}}}
\right)=1$ and $\mathrm{sign}\left(\raisebox{-0.05in}{\scalebox{0.5}{
\includegraphics{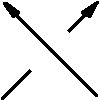}}}\right)=-1.$

Combinatorially, blackboard-framed links can be regarded as equivalence 
classes of link diagrams under the equivalence relation generated by 
Reidemeister moves II and III together with a doubled type I move
which preserves the framing of each component; see \cite{T,FR}.

\[\includegraphics{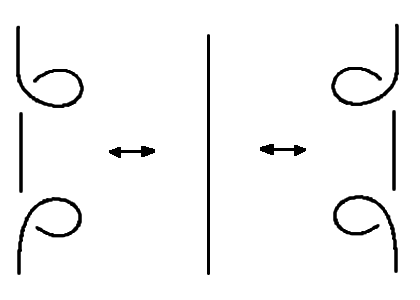} \quad \quad
\includegraphics{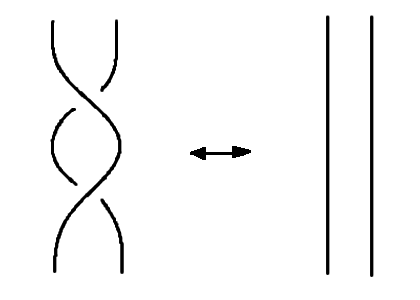} \quad \quad
\includegraphics{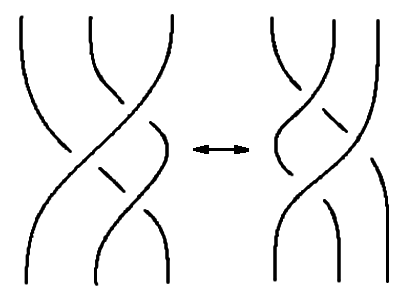}\]

\subsection{Virtual links}

Virtual knot theory is a combinatorial generalization of ordinary
\textit{classical} knot theory; geometrically, a virtual link is
an ordinary link in which the ambient space is not $S^3$ or 
$\mathbb{R}^3$ but $\Sigma\times [0,1]$ for some compact orientable surface 
$\Sigma$, considered up to stabilization (see \cite{K, CKS}). 
More formally, we have:

\begin{definition}
\textup{A \textit{virtual link} is an equivalence class of link diagrams
with an extra crossing type known as a \textit{virtual crossing},
\includegraphics{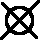}, 
under the equivalence relation determined by the usual Reidemeister moves
together with the four \textit{virtual moves}}
\[\scalebox{0.95}{\includegraphics{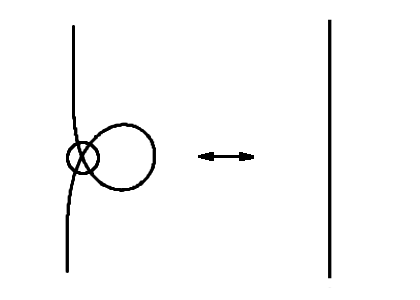}}\quad 
\scalebox{0.95}{\includegraphics{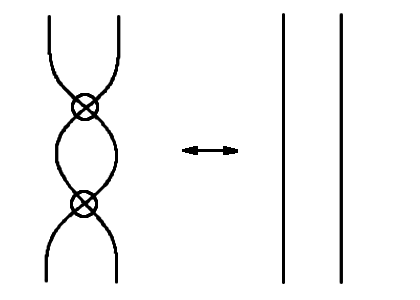}} \]\[
\includegraphics{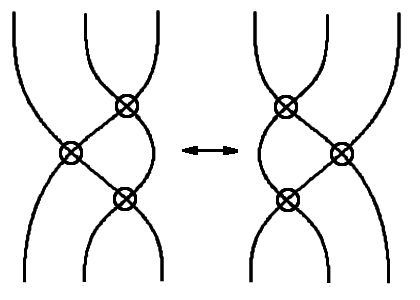} \quad 
\includegraphics{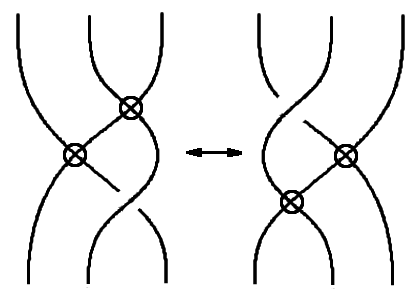}\]
\end{definition}

We can summarize the rules for virtual moves with the \textit{detour move},
which says that any strand with only virtual crossings can be replaced
by any other strand with the same endpoints with only virtual crossings.
That is, a strand with only virtual crossings can move past any virtual
tangle.

\[\includegraphics{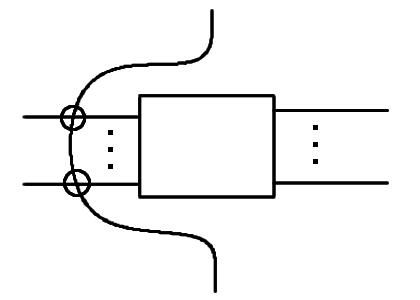} \quad \raisebox{0.5in}{$\longleftrightarrow$}
\quad \includegraphics{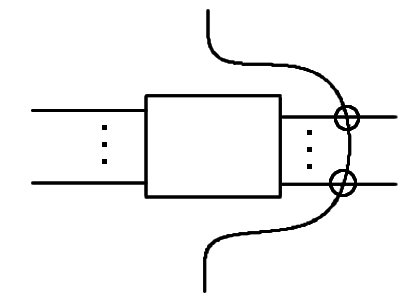}\]

Virtual crossings have no intrinsic over- or under-sense, as they are 
artifacts of drawing non-planar link diagrams on planar paper. Classical
links are then virtual links whose underlying surface $\Sigma$ is $S^2$.
Replacing the classical Reidemeister I move with the doubled version
yields \textit{framed virtual links}. For the remainder of this paper,
we will use ``link diagram'' to mean ``oriented blackboard-framed virtual or 
classical link diagram.'' See \cite{K} for more.

\subsection{\normalsize \textbf{The fundamental rack(s) of a link}}

Associated in \cite{FR} to a framed link $L$ is a rack known as the 
\textit{fundamental rack} of $L$, which we will denote by 
$FR(L)$.\footnote{Does the notation
``FR'' stand for ``fundamental rack'' or ``Fenn and Rourke''? Perhaps both!} 

Geometrically, elements of $FR(L)$ are homotopy classes of paths in the
link complement $X=S^3\setminus (L\times \mathrm{Int}(B^2))$ from the framing 
curves $\cup F_i\subset \partial(X)$ to a fixed base point $x_0\in X$ where the 
terminal point is fixed but the initial point is permitted to wander along 
the framing curve $F_i$ during the homotopy. Any such path $\alpha:[0,1]\to X$ 
has an associated element $\pi(\alpha)$ of the fundamental group $\pi_1(X,x_0)$
defined by traveling backwards along $\alpha,$ then going around the 
canonical meridian 
in $\partial(X)$ intersecting $\alpha(0)$, then going back along $\alpha$. 
The rack operation is then 
\[[\beta]\tr [\alpha] = [\beta \ast \pi(\alpha)]\]
where $\ast$ is concatenation of paths. 
\[\includegraphics{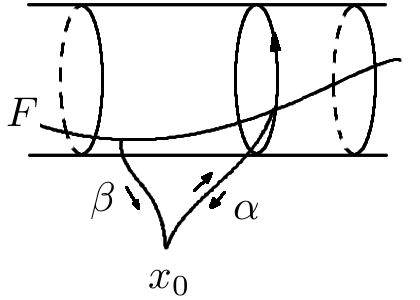}\]

Combinatorially, given a diagram of $L$, the fundamental rack of $L$ consists 
of equivalence classes of rack words in generators corresponding to arcs in 
the diagram of $L$ under the equivalence relation generated by the rack axioms
together with the relations imposed at each crossing. If $L$ is a virtual 
link, we simply ignore the virtual crossings. 

\begin{example}\label{vt1}
\textup{The pictured blackboard-framed virtual link  has 
fundamental rack with generators $x, y$ and relation 
$x\tr y = x \tr (y\tr x)$:}

\noindent
\raisebox{-0.5 in}{\includegraphics{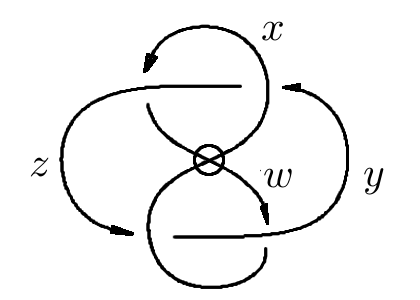}} 
\parbox{3in}{\scalebox{0.9}{$ \begin{array}{lll}
FR(L) & = & \langle x,\ y,\ z,\ w \ | \ y\tr x= z, \ x\tr z = w, x\tr y = w
\rangle \\
 & = & \langle x,\ y, \ w \ | \ x\tr y = w, \  x \tr (y\tr x) =w \rangle \\
 & = & \langle x,\ y \ | \ x\tr y = x \tr (y\tr x) \rangle \\
\end{array}
$}}
\end{example}

For each framing of a given link, we have a fundamental rack, generally
distinct from the racks of the other framings. All of these racks have
a common quotient quandle obtained by setting $a\tr a = a$ for all elements
$a\in FR(L)$, which is the \textit{knot quandle} $Q(L)$ of the unframed 
link $L$. Elements of the knot quandle may be interpreted geometrically as
homotopy classes of paths where the initial point is permitted to wander
not just along the framing curve but along all of $\partial(X)$.
See \cite{FR} for more.

\section{Racks and counting invariants}\label{cinv}

Let $L$ be an unframed link with an ordering on the components. If $L$ has 
$n$ components, then the
framings of $L$ may be indexed by $n$-tuples $\mathbf{w} \in \mathbb{Z}^n$, 
each with its own \textit{a priori} distinct fundamental rack.
At the most basic level, then, there are infinitely many rack counting 
invariants for a given link with respect to any choice of finite target
rack $T$. However, we can make a useful observation which enables us to get
computable ambient isotopy invariants from the $\mathbb{Z}^n$-set of 
racks of $L$.

\begin{definition}
\textup{Let $T$ be a rack. For any $x\in T$, let $x^{\tr n}$ 
for $n\in \mathbb{Z}_+$ be defined recursively by}  
\[x^{\tr 1} = x\tr x \quad \mathrm{and} 
\quad x^{\tr (k+1)}= x^{\tr k} \tr x^{\tr k}.\] 
\textup{For each element $x\in T$, the \textit{rack rank} of $x\in T$,
denoted $N(x)$, is the minimal natural number $N\in\mathbb{Z}_+$ such that
$x^{\tr N}=x$, or $N(x)=\infty$ if $x^{\tr N}\ne x$ for all $N\in \mathbb{N}$. 
The \textit{rack rank} of $T$, denoted $N(T)$ or just
$N$ if $T$ is understood, is the least common multiple of the rack ranks
of the elements of $T$,} 
\[N(T)=\mathrm{lcm}\{N(x)\ |\ x\in T\}.\]
\end{definition}

To see that $N(x)$ is well defined for all $x\in T$, we first need a lemma.

\begin{lemma}\label{lem:opeq}
Let $T$ be any rack and $x,y\in T$. Then $y\tr(x\tr x) = y\tr x$.
\end{lemma}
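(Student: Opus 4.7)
The plan is to derive the identity directly from the two rack axioms, with axiom (ii) (self-distributivity) providing the main identity and axiom (i) (invertibility of $f_x$) allowing the required rewriting.

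First I would apply axiom (ii) to the triple $(a,b,c) = (a,x,x)$ for an arbitrary $a \in T$. This yields
\[(a \tr x) \tr x = (a \tr x) \tr (x \tr x),\]
since both sides are $(a \tr b) \tr c$ and $(a \tr c) \tr (b \tr c)$ for this choice of $b = c = x$. The right-hand side has the form $f_{x \tr x}(a \tr x)$ while the left-hand side is $f_x(a \tr x)$, so the identity says these two maps agree on every element of the form $a \tr x$.

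Next I would invoke axiom (i): the map $f_x : T \to T$ is a bijection, so the set $\{a \tr x : a \in T\}$ is all of $T$. Equivalently, given any $y \in T$ I can set $a = y \tr^{-1} x$, so that $a \tr x = y$. Substituting this into the displayed equation above gives exactly
\[y \tr x = y \tr (x \tr x),\]
which is the claim.

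The only real subtlety is noticing that axiom (ii) alone produces an identity between $f_x$ and $f_{x\tr x}$ only on the image of $f_x$, so one must use axiom (i) to conclude that this image is all of $T$. Once that observation is in hand the proof is a single substitution, so there is no genuine obstacle; the lemma is essentially a two-line consequence of the axioms.
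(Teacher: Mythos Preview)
Your proof is correct and is essentially the same argument as the paper's: both apply self-distributivity to the triple $(a,x,x)$ to get $(a\tr x)\tr x=(a\tr x)\tr(x\tr x)$ and then use axiom (i) to substitute $a=y\tr^{-1}x$ so that $a\tr x=y$. The paper compresses this into a single chain of equalities, while you spell out the role of the bijectivity of $f_x$ more explicitly, but the content is identical.
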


\begin{proof}
\[y\tr(x\tr x) = [(y\tr^{-1} x)\tr x]\tr (x\tr x)
= [(y\tr^{-1}x)\tr x]\tr x =y\tr x.\]
\end{proof}

\begin{remark}
\textup{Two elements $x,y\in T$ are \textit{operator equivalent} if
$z\tr x=z\tr y$ for all $z\in T$. If $T$ is a finite rack, then
two elements are operator equivalent iff their columns in the matrix of $T$ 
are identical.
Lemma \ref{lem:opeq} says that the $\tr$-powers of $x\in T$ are
all operator equivalent. Indeed, the set of operator equivalence classes
of a rack forms a quandle under the natural operation 
$[x]\tr[y]=[x\tr y]$.}
\end{remark}

\begin{corollary}\label{diagperm}
Let $T$ be a rack. If $x\tr x=y\tr y$, then $x=y$.
\end{corollary}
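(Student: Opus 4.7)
The plan is to leverage Lemma \ref{lem:opeq} together with rack axiom (i) (invertibility of the maps $f_x$) in a short chain of substitutions. The key observation is that Lemma \ref{lem:opeq} really says that $x$ and $x\tr x$ always act identically on the right, i.e.\ $f_{x\tr x}=f_x$ as functions $T\to T$. So if two elements have the same ``diagonal image'' $x\tr x = y\tr y$, transitivity of this observation will force $f_x=f_y$, and then I can invert.

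First I would assume $x\tr x = y\tr y$ and fix an arbitrary $z\in T$. Applying Lemma \ref{lem:opeq} once in each direction gives
\[ z\tr x \;=\; z\tr(x\tr x)\;=\;z\tr(y\tr y)\;=\;z\tr y. \]
Since $z$ was arbitrary, this says $f_x=f_y$ as maps on $T$, and hence $f_x^{-1}=f_y^{-1}$ by the invertibility provided by rack axiom (i).

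To finish, I would apply $f_x^{-1}$ to the assumed equality $x\tr x = y\tr y$. On the left I get $f_x^{-1}(x\tr x)=x$ by definition of $\tr^{-1}$, and on the right $f_x^{-1}(y\tr y)=f_y^{-1}(y\tr y)=y$, using the equality of inverses established above. This yields $x=y$.

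I do not expect any serious obstacle: the whole argument is essentially ``operator equivalence plus column invertibility.'' The only subtle point worth stating cleanly is why $f_x=f_y$ is permitted to be upgraded from operator equivalence of $x\tr x$ and $x$ (which is exactly what Lemma \ref{lem:opeq} asserts) to operator equivalence of $x$ and $y$; this is purely transitivity and substitution. After that, invoking axiom (i) is automatic.
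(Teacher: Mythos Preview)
Your proof is correct and uses the same two ingredients as the paper's proof, namely Lemma~\ref{lem:opeq} and rack axiom~(i). The paper's version is marginally more direct: setting $z=x\tr x=y\tr y$, it observes $x\tr z=x\tr(x\tr x)=x\tr x=y\tr y=y\tr(y\tr y)=y\tr z$ and then cancels $z$ on the right via injectivity of $f_z$, so it never needs to establish $f_x=f_y$ globally or pass to inverses.
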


\begin{proof}
Suppose $x\tr x=y\tr y=z.$ We have
$x\tr x = x\tr (x\tr x)=x\tr z$
and $y\tr y = y\tr (y\tr y) = y\tr z.$
Then $x\tr x=y\tr y$ implies $x\tr z = y\tr z$  and rack axiom (i)
implies $x=y$.
\end{proof}

In terms of rack matrices, corollary \ref{diagperm} says that like the
columns of a rack matrix, the diagonal of a rack matrix must be a permutation.
Indeed, if we define $\pi:T\to T$ by $\pi(x)=x\tr x$ then the diagonal
of a rack matrix tell us the permuation $\pi$.
It then easily follows that $N(x)<|T|$ for any $x\in T$ where $T$ is a 
finite rack -- indeed, $N(T)$ is just the exponent of $\pi\in S_{|T|}$. 
This fact also follows from proposition 7.3 in \cite{LR}. 

We will also need the following standard result (see \cite{FR} or \cite{T} 
for example):

\begin{theorem}\label{thm:fld}
If $D$ and $D'$ are ambient isotopic link diagrams, we can modify $D'$ 
to obtain a diagram $D''$ which is framed isotopic to $D$ by selecting 
an arc on each component of $D'$ and adding positive or negative kinks
until the framings match.
\end{theorem}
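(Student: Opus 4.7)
The plan is to exploit the contrast between framed and ambient isotopy: self-writhe is preserved by the former but changes by $\pm 1$ under a single type I move, so matching self-writhes component by component should reduce the problem to the ``same self-writhe $\Rightarrow$ framed isotopic'' direction.

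First I would verify that $sw(C_i)$ is invariant under Reidemeister II, III, and the doubled type I move, by a direct check from the definition $sw(C_i)=\sum_{x\in S_i}\mathrm{sign}(x)$: a type II move adds one $+$ and one $-$ self-crossing and cancels net, a type III move permutes crossings without changing signs or self-incidence, and the doubled type I move inserts (or removes) a $+$ kink together with a $-$ kink on the same component. Conversely, a single (unbalanced) type I move changes $sw(C_i)$ by exactly $\pm 1$ on the affected component and fixes $sw(C_j)$ for $j\neq i$. So setting $k_i=sw_D(C_i)-sw_{D'}(C_i)$ and attaching $k_i$ positive kinks (or $|k_i|$ negative kinks, if $k_i<0$) to the selected arc of the $i$-th component of $D'$ produces a diagram $D''$ with $sw_{D''}(C_i)=sw_D(C_i)$ for every $i$, still ambient isotopic to $D$.

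It remains to show that ambient isotopy plus matching self-writhes implies framed isotopy, i.e.\ $D\sim_{\mathrm{framed}} D''$. Fix a sequence of Reidemeister I, II, III moves realizing the ambient isotopy from $D$ to $D''$. The type I moves in this sequence add a multiset of signed kinks on each component, and because $sw_D(C_i)=sw_{D''}(C_i)$, the signs on $C_i$ sum to zero; hence the kinks pair off into $+/-$ couples on each component. A standard diagrammatic lemma says that a single kink can be slid past any crossing by a short combination of Reidemeister II and III moves; applying this repeatedly, I would transport one partner of each $+/-$ pair along its component until it sits adjacent to its opposite-signed partner, then remove the pair by the inverse doubled type I move. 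Since every move used in this cleanup is of framed type (II, III, or doubled I), the resulting reduction converts the original sequence into a framed isotopy from $D$ to $D''$.

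The main obstacle is the last paragraph's sliding-and-cancellation step: carefully justifying that any kink can be freely transported along its component through arbitrary over- and under-crossings using only II and III, and that an adjacent $+/-$ pair on the same arc can always be eliminated using only framed moves. This is classical (compare \cite{FR,T}) but genuinely combinatorial; the key subroutines are (a) ``kink past an over-strand'' via one II and one III move, (b) ``kink past an under-strand'' by the symmetric maneuver, and (c) recognizing an adjacent $+/-$ pair as the pattern produced by a doubled type I move so that its inverse applies. Once these subroutines are in hand, the reduction is iterative and terminates because the number of unpaired type I moves strictly decreases.
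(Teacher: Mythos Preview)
Your argument is correct, but it is organized differently from the paper's. You first compute the self-writhe discrepancies $k_i$, adjoin $k_i$ kinks to $D'$ to get $D''$, and then prove the separate lemma ``ambient isotopic with equal self-writhe vectors $\Rightarrow$ framed isotopic'' by pairing and sliding kinks. The paper instead runs the construction in one pass: take the given Reidemeister sequence from $D$ to $D'$ and replace each type~I move by the doubled type~I move. This is now a sequence of framed moves, so its endpoint is automatically framed isotopic to $D$; that endpoint is $D'$ decorated with the compensating kinks introduced by the replacements, which one then slides to the chosen arcs. In the paper's route the ``correct'' number and sign of kinks on each component is produced by the construction rather than computed in advance from $sw_D-sw_{D'}$, and no separate ``same writhe $\Rightarrow$ framed isotopic'' lemma is needed. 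Both arguments rest on the same kink-sliding subroutine (past classical crossings via II+III, and past virtual crossings via detour), so the combinatorial work is essentially identical; your decomposition just isolates Trace's theorem as an explicit intermediate step. One small wording issue: in your third paragraph the type~I moves in the sequence may remove kinks as well as add them, so ``add a multiset of signed kinks'' should be read as the net signed count; the pairing argument still goes through once this is said correctly.
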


The proof of theorem \ref{thm:fld} involves taking any Reidemeister
move sequence starting with $D$ and ending with $D'$ and replacing every
type I move with a double I move to adjust the framed isotopy class; at the end,
we can then slide the extra crossings along the component until they arrive at
the chosen arc. Note that this argument applies to virtual links as well as 
classical links, since we can slide a classical kink past a virtual crossing 
using a detour move. Note also that without loss of generality we can assume 
that all kinks added have positive winding number since we need not preserve
the regular isotopy class, only the blackboard-framed isotopy class.

\begin{definition}\textup{
Let $N\in \mathbb{N}$. We say two blackboard-framed oriented link diagrams 
are} \textit{$N$-phone cord 
equivalent} \textup{if one may be obtained from other by a finite sequence of
Reidemeister II and III moves and the following \textit{$N$-phone cord} move,
where $N$ is the number of loops:}
\[\includegraphics{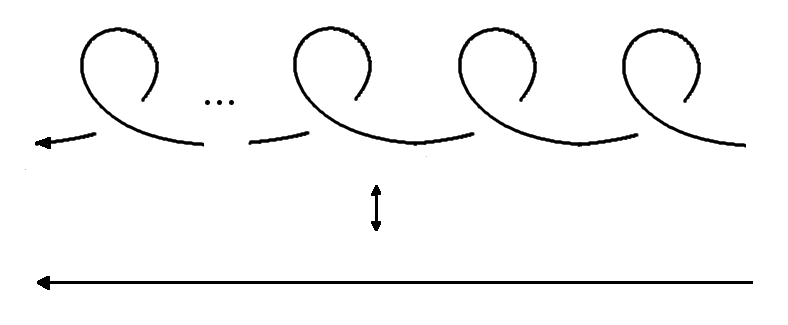}\]
\end{definition}

\begin{proposition}
Let $T$ be a finite rack with rack rank $N$. If two link diagrams $D$ and 
$D'$ are $N$-phone cord isotopic then 
$|\mathrm{Hom}(FR(D),T)|=|\mathrm{Hom}(FR(D'),T)|$.
\end{proposition}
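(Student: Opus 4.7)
The plan is to verify that each of the moves generating $N$-phone cord equivalence preserves $|\mathrm{Hom}(FR(\cdot),T)|$. Reidemeister II and III moves preserve the fundamental rack itself, since rack axiom (i) is the content of R2 and axiom (ii) is the content of R3, so at the level of the rack presentation they amount to Tietze transformations. Hence they leave the set of homomorphisms to $T$ unchanged. It remains only to show invariance under the $N$-phone cord move.

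For the $N$-phone cord move, my approach is a local analysis. I would label the arcs appearing in the ``before'' and ``after'' pictures of the move and write down the rack relations produced at each crossing. The key claim, which I would establish by induction on $k$, is that after the arc colored $x$ passes through $k$ of the loops in the phone cord, the color of the current arc is $x^{\tr k}$. The inductive step uses the recursion $x^{\tr(k+1)} = x^{\tr k} \tr x^{\tr k}$, which matches the structure of each additional self-crossing loop: both the overstrand and understrand of such a loop carry the same label $x^{\tr k}$, and the crossing produces $x^{\tr k} \tr x^{\tr k}$ on the outgoing arc.

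After all $N$ loops, the outgoing color is $x^{\tr N}$, which equals $x$ because $N$ is the rack rank of $T$. Hence the coloring constraint across the phone cord region is consistent, and the intermediate arcs inside the phone cord are each uniquely determined by the input color $x$ via the recursion above, with rack axiom (i) ensuring no extra freedom. This yields a bijection between $\mathrm{Hom}(FR(D),T)$ and $\mathrm{Hom}(FR(D'),T)$: a coloring of $D$ extends to a unique coloring of $D'$ by filling in the phone cord using the recursion, and conversely a coloring of $D'$ restricts to a coloring of $D$ by forgetting the extra arcs.

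The main obstacle will be the local analysis itself: one must precisely identify the structure of the $N$-phone cord move, verify which strand is over and which is under at each crossing, and confirm that the pattern of relations matches the doubling recursion defining $x^{\tr k}$ rather than some other nearby pattern. Orientations and the direction of traversal of the phone cord need to be handled carefully, but once the local picture is set up correctly, the induction and the final invocation of $x^{\tr N} = x$ are routine, and the bijection of colorings is immediate.
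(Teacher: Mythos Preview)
Your proposal is correct and follows essentially the same approach as the paper: the paper's proof consists of a single sentence invoking the definition of rack rank together with a picture showing that the sequence of colors through the $N$ loops is $x, x^{\tr 1}, \dots, x^{\tr N}=x$, exactly the bijection you spell out. Your write-up is simply a more explicit version of the same argument, with the inductive verification of the recursion $x^{\tr(k+1)}=x^{\tr k}\tr x^{\tr k}$ made precise and the invariance under R2 and R3 stated separately.
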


\begin{proof}
From the definition of rack rank, it is easy to see that $N$-phone cord moves
induce a bijection on the set of colorings as illustrated.
\[\includegraphics{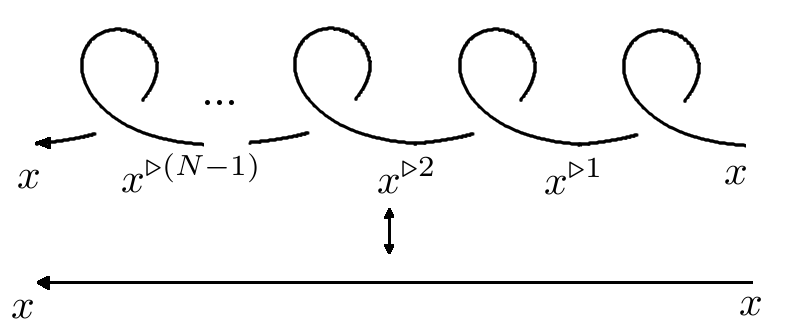}\]
\end{proof}

For two $n$-tuples $\mathbf{v},\ \mathbf{w}\in \mathbb{Z}^n$, let us write
$\mathbf{v}\equiv \mathbf{w}\ \mathrm{mod} \ N$ if for all components
$i=1,\dots n$ we have $v_i\equiv w_i \ \mathrm{mod} \ N$.

\begin{corollary}
Let $T$ be a finite rack with rack rank $N$.
If two link diagrams $D$ and $D'$ are ambient isotopic and have 
writhe vectors congruent modulo $N$, 
then $|\mathrm{Hom}(FR(D),T)|=|\mathrm{Hom}{FR(D'),T}|$.
\end{corollary}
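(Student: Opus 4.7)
The plan is to combine Theorem~\ref{thm:fld} with the preceding Proposition, reducing the corollary to the framed-isotopy invariance of the fundamental rack together with the $N$-phone cord invariance of the coloring count. From the congruence hypothesis I would write $sw(D)-sw(D')=N\mathbf{k}$ for some $\mathbf{k}\in\mathbb{Z}^n$, and then produce auxiliary diagrams $\tilde D$ and $\tilde D'$, obtained from $D$ and $D'$ respectively by inserting multiples of $N$ positive kinks on chosen arcs of each component, engineered so that $\tilde D$ and $\tilde D'$ share a common self-writhe vector.

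Concretely, I would pick nonnegative vectors $\mathbf{a},\mathbf{b}\in\mathbb{Z}_{\geq 0}^n$ with $\mathbf{a}-\mathbf{b}=-\mathbf{k}$, for instance $a_i=\max(-k_i,0)$ and $b_i=\max(k_i,0)$. Form $\tilde D$ by inserting $Na_i$ positive kinks on a chosen arc of the $i$th component of $D$, and form $\tilde D'$ similarly from $D'$ with $Nb_i$ kinks per component. Then $sw(\tilde D)=sw(D)+N\mathbf{a}=sw(D')+N\mathbf{b}=sw(\tilde D')$ by construction, and $\tilde D,\tilde D'$ remain ambient isotopic since adding kinks does not alter the underlying ambient isotopy class. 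Hence Theorem~\ref{thm:fld} applied to $\tilde D$ and $\tilde D'$ (which already share framings) gives a framed isotopy $\tilde D\sim_{\mathrm{framed}}\tilde D'$, so in particular $FR(\tilde D)\cong FR(\tilde D')$ and their Hom-sets into $T$ are equinumerous.

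It then remains to observe that each block of $N$ consecutive positive kinks on an arc can be arranged, via Reidemeister~II moves, into the $N$-phone cord configuration pictured in the preceding definition. Thus $D$ and $\tilde D$ are $N$-phone cord equivalent (a sequence of $a_i$ moves on the $i$th component), and likewise for $D'$ and $\tilde D'$. The preceding Proposition therefore yields $|\mathrm{Hom}(FR(D),T)|=|\mathrm{Hom}(FR(\tilde D),T)|$ and $|\mathrm{Hom}(FR(D'),T)|=|\mathrm{Hom}(FR(\tilde D'),T)|$, and chaining through the framed isotopy $\tilde D\sim\tilde D'$ gives the desired equality.

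The main obstacle I anticipate is the sign asymmetry in Theorem~\ref{thm:fld}, which as stated is most cleanly used when only positive kinks are added: this forces the adjustment to be distributed across both $D$ and $D'$ rather than added to just one of them, which is precisely the role of the split $\mathbf{a}-\mathbf{b}=-\mathbf{k}$ above. The only other point requiring care is the identification of $N$ stacked kinks with an $N$-phone cord loop, but this is purely pictorial and uses only Reidemeister~II, so once the framing bookkeeping is set up the result assembles immediately from the earlier machinery.
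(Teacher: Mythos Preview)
Your proposal is correct and follows precisely the route the paper intends: the corollary is stated without proof because it is immediate from Theorem~\ref{thm:fld} together with the preceding Proposition, which is exactly the combination you invoke. Your extra bookkeeping with the split $\mathbf{a}-\mathbf{b}=-\mathbf{k}$ to keep all added kinks positive is more careful than strictly necessary (Theorem~\ref{thm:fld} as stated already allows kinks of either sign, and $N$ negative kinks are handled by the mirror $N$-phone cord move, derivable from the stated one via Reidemeister~II), but it does no harm and makes the dependence on the earlier results fully explicit.
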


Note that if $T$ is a finite rack with rack rank $N$ and $L$ is a link, 
the set of writhes of each component of $L$ modulo $N$ can be indexed 
by $\mathbf{w}\in (\mathbb{Z}_N)^c$ where $c$ is the number of 
components of $L$. For ease of notation, when $N$ and $c$ are understood 
let us denote $(\mathbb{Z}_N)^c=W$ and a blackboard-framed 
diagram of $D$ with writhe vector $\mathbf{w}\in W$ by $(D,\mathbf{w}).$

\medskip

We can now define computable unframed knot and link invariants using these
cardinalities.

\begin{definition}
\textup{Let $T$ be a finite rack and $L$ a link with $c$ components. 
The \textit{integral rack counting invariant} of $L$ with respect to $T$ is}
\[\Phi^{\mathbb{Z}}_T(L) = \sum_{\mathbf{w}\in W} 
|\mathrm{Hom}(FR(D,\mathbf{w}),T)|.\]
\end{definition}

Note that if $T$ is a quandle, then we have $N(T)=1$ and 
$\Phi^{\mathbb{Z}}_T(L)$ is
the ordinary quandle counting invariant $|\mathrm{Hom}(Q(L),T)|$. Hence
the integral rack counting invariant is the natural generalization of the
quandle counting invariant to the finite rack case.

\begin{example}
\textup{If $T=\{x_1,\dots, x_n\}$ is a constant action rack defined by an 
$n$-cycle, an undercrossing color $\tau$ becomes $\sigma(\tau)$ 
if going right-to-left and $\sigma^{-1}(\tau)$ if going left-to-right,
so pushing a color around the knot yields an ending color of 
$\sigma^{w(K)}(\tau)$ where $w(K)$ is the writhe of $K$ and our starting 
color was $\tau$. Thus, there is a rack coloring of a framed knot $K$ by 
$T$ if and only if the writhe of $K$ is zero mod $n$. Indeed, there are 
$n$ such colorings for the $0$-framing mod $n$ and none for the others, 
and we have $\Phi^{\mathbb{Z}} _T(K)=n+(n-1)0=n$ for any knot 
$K$. This generalizes the fact that $|\mathrm{Hom}(K,T)|=n$ for $T$ a 
trivial quandle of cardinality $n$ and $K$ a knot.}
\end{example}

\begin{example}
\textup{Let $T_{\ast}$ be the rack with matrix
\[M_{T_{\ast}}\left[\begin{array}{ccccccc}
1 & 3 & 2 & 1 & 1 & 1 &  1  \\
3 & 2 & 1 & 2 & 2 & 2 & 2  \\
2 & 1 & 3 & 3 & 3 & 3 & 3  \\
4 & 4 & 4 & 6 & 4 & 6 & 4  \\
5 & 5 & 5 & 5 & 7 & 5 & 7  \\
6 & 6 & 6 & 4 & 6 & 4 & 6  \\
7 & 7 & 7 & 7 & 5 & 7 & 5  
\end{array}\right].\] The integral rack counting invariant with respect to 
$T_{\ast}$ distinguishes the trefoil $3_1$ from the unknot $U_1$ with 
$\Phi^{\mathbb{Z}}_{T_{\ast}}(U_1))=10$ and $\Phi^{\mathbb{Z}}_{T_{\ast}}(3_1)=22$, 
as the reader can verify from the tables of colorings listed in table 1. 
Here $N(T_{\ast})=2$, so we 
need only consider one diagram each of $U_1$ and $3_1$ with odd writhe and 
one of each with even writhe.}
\end{example}

\begin{table}[!ht]
\begin{center}
\begin{tabular}{|c|l|} \hline
$D$ & Colorings by $T_{\ast}$ \\ \hline 
\raisebox{-0.5in}{\includegraphics{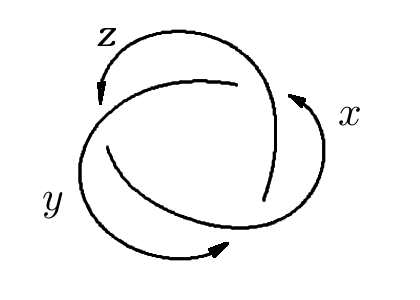}} & $
\begin{array}{|c|ccccccccc|} \hline
x & 1 & 1 & 1 & 2 & 2 & 2 & 3 & 3 & 3 \\
y & 1 & 2 & 3 & 1 & 2 & 3 & 1 & 2 & 3 \\
z & 1 & 3 & 2 & 3 & 2 & 1 & 2 & 1 & 3 \\ 
\hline \end{array}$
 \\ \hline
\raisebox{-0.5in}{\includegraphics{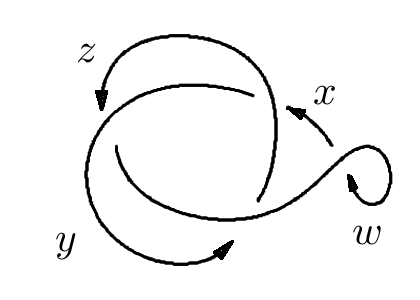}} & 
$\begin{array}{|c|ccccccccccccc|} \hline
x & 1 & 1 & 1 & 2 & 2 & 2 & 3 & 3 & 3 & 6 & 4 & 5 & 7 \\
y & 1 & 2 & 3 & 1 & 2 & 3 & 1 & 2 & 3 & 4 & 6 & 7 & 5 \\
z & 1 & 3 & 2 & 3 & 2 & 1 & 2 & 1 & 3 & 6 & 4 & 5 & 7 \\ 
w & 1 & 1 & 1 & 2 & 2 & 2 & 3 & 3 & 3 & 4 & 6 & 7 & 5 \\ \hline
\end{array}$ \\ \hline
\raisebox{-0.5in}{\includegraphics{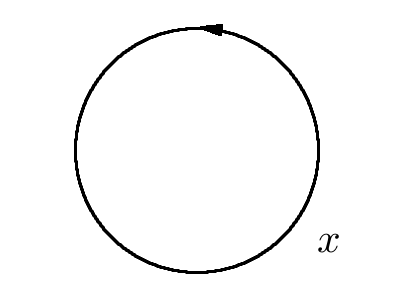}} & 
$\begin{array}{|c|ccccccc|} \hline
x & 1 & 2 & 3 & 4 & 5 & 6 &7 \\ \hline \end{array}$ \\  \hline 
\raisebox{-0.5in}{\includegraphics{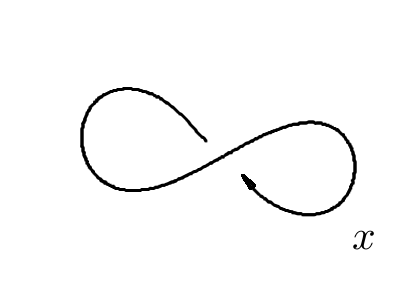}} & 
$\begin{array}{|c|ccc|} \hline
x & 1 & 2 & 3 \\ \hline \end{array}$ \\ \hline
\end{tabular}
\end{center}
\caption{Rack colorings of $3_1$ and $U_1$ by $T_{\ast}$}
\end{table}

We can enhance the integer rack counting invariant by keeping track of which
framings contribute which colorings. For a writhe vector 
$\mathbf{w}=(w_1,\dots,w_c)\in W=(\mathbb{Z}_N)^c$ let us denote the product 
$\prod_{k=1}^c q_1^{w_1}q_2^{w_2}\dots q_c^{w_c}$ by $q^{\mathbf{w}}$. Then we have:

\begin{definition}
\textup{Let $T$ be a finite rack and $L$ a link with $c$ components
and writhe vector $\mathbf{w}=(w_1,\dots,w_c)\in W$. 
The \textit{writhe-enhanced rack counting invariant} of $L$ with respect to 
$T$ is given by}
\[\Phi^{W}_{T}(L) = \sum_{\mathbf{w}\in W}
|\mathrm{Hom}(FR(L,\mathbf{w}),T)|q^{\mathbf{w}}. \]
\end{definition}

The writhe-enhanced rack counting invariant holds more information
than the simple version, as the next example shows.

\begin{example}
\textup{The constant action rack $T$ with rack matrix 
$\left[\begin{array}{cc}
2 & 2 \\
1 & 1 \\
\end{array}\right]$ has rack rank $N(T)=2$. The Hopf link $H$ and the 
two-component unlink $U_2$ both have integral rack counting invariant value
$\Phi^{\mathbb{Z}}_{T}(L)=4$ with respect to $T$, but the writhe-enhanced 
rack counting invariants are distinct, with $\Phi^{W}_{T}(H)=4q_1q_2$ 
and $\Phi^{W}_{T}(U_2)=4$ as the reader can easily verify from table 2.}
\end{example}

\begin{table}[!ht]
\[
\begin{array}{|cccc|}\hline
\includegraphics{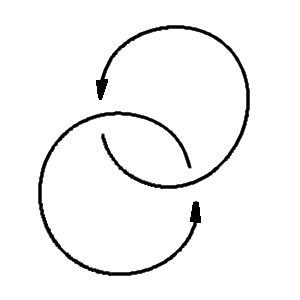} & \includegraphics{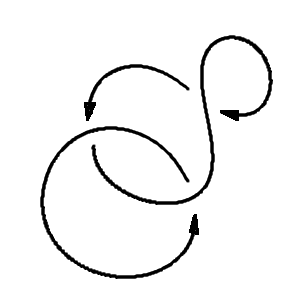} &
\includegraphics{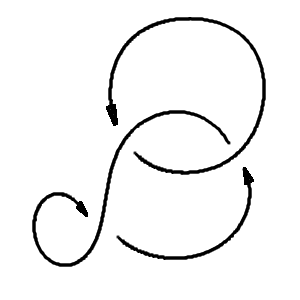} & \includegraphics{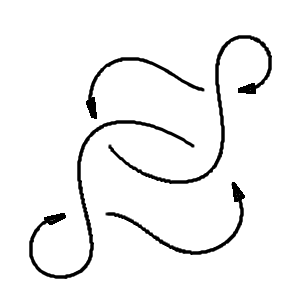} \\
 0 & 0 & 0 & 4 \\ \hline
\includegraphics{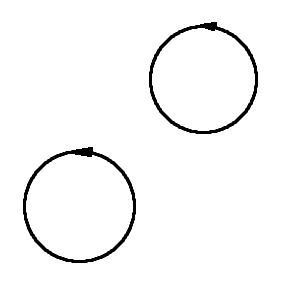} & \includegraphics{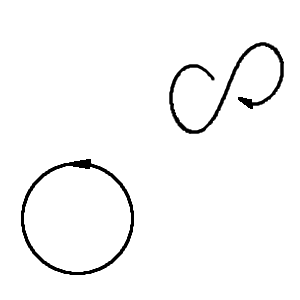} &
\includegraphics{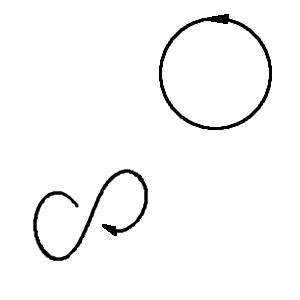} & \includegraphics{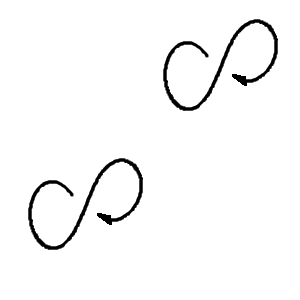} \\
 4 & 0 & 0 & 0 \\ \hline
\end{array}\]
\caption{Numbers of colorings of $H$ and $U_2$ by $T_{(12)}$.}
\end{table}

Indeed, generalizing the preceeding example we have
\begin{proposition}
Let $L$ be a two-component classical link and $T=T_{\sigma}$ a constant 
action rack with $\sigma\in S_N$ an $N$-cycle. Then the writhe-enhanced rack 
counting invariant has the form
\[\Phi^{W}_{T_{\sigma}}(L)=N^2q_1^lq_2^l\]
where $l$ is the negative of the linking number $lk(L_1,L_2)$ of $L$ mod $N$.
\end{proposition}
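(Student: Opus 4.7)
The plan is to exploit the defining feature of a constant action rack: since $x_i \tr x_j = x_{\sigma(i)}$ depends only on the undercolor $i$ and not on the overcolor $j$, a rack coloring of a diagram by $T_\sigma$ amounts to a choice, for each arc, of an element of $T$ such that at every undercrossing the understrand's color is obtained from the color on the other side by applying $\sigma^{\pm 1}$ (with the sign given by the sign of the crossing), with no coupling whatsoever to the overstrand's color. In particular the colorings of the two components are completely independent of each other, and a coloring of a single component is determined by a ``seed'' color chosen on any one of its arcs and propagated around by the rules above.

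Next I would analyze the monodromy around each component. Traversing $L_1$ once, the seed color on $L_1$ is transformed into $\sigma^{e_1}$ of itself, where $e_1$ is the sum of signs of all undercrossings on $L_1$. This sum splits as $e_1 = w_1 + m_1$, where $w_1$ is the self-writhe of $L_1$ (signed sum over self-undercrossings of $L_1$) and $m_1$ is the signed sum over mixed crossings where $L_1$ passes under $L_2$. Using the classical identity $m_1 = \mathrm{lk}(L_1, L_2)$ (the signed count of crossings of one oriented component under the other is invariant under Reidemeister II and III moves and so is a well-defined link invariant equal to the linking number), consistency of the $L_1$ coloring becomes $\sigma^{w_1 + \mathrm{lk}(L_1, L_2)} = \mathrm{id}$, which since $\sigma$ is an $N$-cycle is equivalent to $w_1 \equiv -\mathrm{lk}(L_1, L_2) \pmod N$. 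The identical argument on $L_2$ gives $w_2 \equiv -\mathrm{lk}(L_1, L_2) \pmod N$.

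Setting $l \equiv -\mathrm{lk}(L_1, L_2) \pmod N$, the only element of $W = (\mathbb{Z}_N)^2$ contributing to the sum defining $PR(L, T)$ is thus $\mathbf{w} = (l, l)$. For that framing, the $N$ independent seed choices on $L_1$ and the $N$ independent seed choices on $L_2$ combine to give exactly $|\mathrm{Hom}(FR(L, \mathbf{w}), T)| = N^2$ colorings, while every other writhe vector contributes zero. Summing yields $PR(L, T) = N^2 q_1^l q_2^l$, as claimed. The one slightly delicate ingredient is the identity $m_1 = m_2 = \mathrm{lk}(L_1, L_2)$; once this standard fact is invoked, the rest is bookkeeping because the constant action operation decouples the components' colorings entirely and reduces the coloring count to counting orbits of a single cyclic permutation.
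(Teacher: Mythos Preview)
Your argument is correct and follows essentially the same route as the paper: traverse each component, observe that the monodromy on the seed color is $\sigma$ raised to the signed count of under-passages, split this count as self-writhe plus the mixed undercrossing count (which equals the linking number for classical links), and conclude that exactly one writhe vector $(l,l)$ admits colorings, with $N$ independent seed choices per component. Your write-up is more explicit than the paper's—particularly in spelling out the decoupling of components and the identity $m_1=m_2=\mathrm{lk}(L_1,L_2)$—but the underlying idea is identical.
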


\begin{proof}
Traveling around a component, to get a valid coloring the end color must 
match the initial color, so we must go through $N$ crossings (counted 
algebraicially). Since $lk(L_1,L_2)$ of these are multi-component crossings
which do not contribute to the component's writhe, we must have 
\[l+lk(L_1,L_2)=N.\]
The same holds for both components if $L$ is classical. There are $N$ choices
of starting color for each component and every pair produces exactly one 
coloring, so there are $N^2$ total colorings.
\end{proof}

\begin{corollary}
If $L$ is a two-component classical link and $T=T_{\sigma}$ a constant 
action rack with $\sigma\in S_N$ an $N$-cycle such that the exponents of
of $q_1$ and $q_2$ differ in any term of $\Phi^{W}_{T_{\sigma}}(L)$, then $L$ 
is non-classical.
\end{corollary}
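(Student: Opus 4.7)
The plan is to argue by contrapositive, applying the preceding proposition directly. Specifically, I would assume for contradiction that $L$ is classical (interpreting the statement as a criterion for non-classicality: the ``classical'' in the hypothesis appears to be a typographical slip, since otherwise the conclusion contradicts the hypothesis outright).

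Under this assumption, $L$ is a two-component classical link and $T = T_\sigma$ with $\sigma$ an $N$-cycle, so the previous proposition applies and gives
\[ PR(L,T) = N^2 q_1^l q_2^l, \]
where $l \equiv -lk(L_1,L_2) \pmod{N}$. The key observation is then that this polynomial consists of a single monomial in which the exponents of $q_1$ and $q_2$ are both equal to $l$. In particular, every term of $PR(L,T)$ has equal exponents on $q_1$ and $q_2$, which contradicts the hypothesis that some term has differing exponents. Therefore $L$ cannot be classical.

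There is essentially no obstacle here: the content of the corollary is packaged entirely in the preceding proposition, and the corollary is just its logical contrapositive. The only subtlety worth flagging in the write-up is that the proposition assumes $L$ is classical precisely to force the ``the same holds for both components'' step (since for classical links the two components each pass through the same number $N$ of algebraic crossings with the multi-component crossings counted by $lk(L_1,L_2)$), so the equality of exponents genuinely uses classicality and the corollary's criterion is not vacuous.
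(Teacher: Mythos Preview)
Your argument is correct and is exactly the approach the paper intends: the corollary is stated without proof immediately after the proposition, so it is meant to follow as the direct contrapositive you describe. Your observation that ``classical'' in the hypothesis is a slip (the statement should read ``two-component link'' or ``two-component virtual link'') is also accurate, since as written the hypothesis and conclusion are formally inconsistent.
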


\section{Rack cocycle invariants}\label{rh}

In this section we generalize the quandle 2-cocycle invariants defined
in \cite{CJKLS} to the finite rack case.

The rack counting invariants described in the last section are
cardinalities of sets of homomorphisms which are unchanged by Reidemeister
moves. However, a set is more than a cardinality, and we would like to
recover as much information from these sets of homomorphisms as possible.

In \cite{CJKLS}, the idea is to associate a sum in an abelian group $A$ 
called a \textit{Boltzmann weight} to a quandle-colored knot diagram in 
such a way that the sum
does not change under Reidemeister moves. Then, instead of counting ``1'' for 
each homomorphism, we count its Boltzmann weight, transforming the set of 
colorings into a multiset of these weights. Such multisets are commonly
encoded as polynomials by taking a generating function, i.e. by converting 
the multiset elements to exponents and multiplicities to coefficients of a 
dummy variable, e.g. $\{1,1,1,4,4\}$ becomes $3t+2t^4$.

The Boltzmann weights are defined as follows: at every crossing in a 
rack-colored link diagram, we want to count $\phi(a,b)$ at a positive 
crossing or $-\phi(a,b)$ at a negative crossing where $b$ is the color on 
the overcrossing strand and $a$ is the color on the inbound understrand for 
positive crossings and the outbound understrand for negative crossings. 

\[\includegraphics{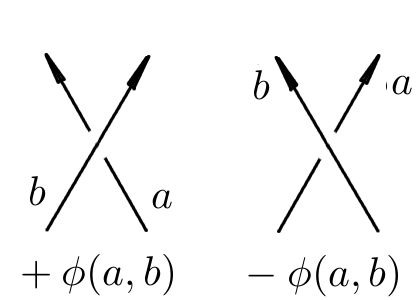}\]

This weighting rule has the advantage that the contributions from the two 
crossings in a Reidemeister type II move cancel, so the sum is automatically 
invariant under II moves:

\[\includegraphics{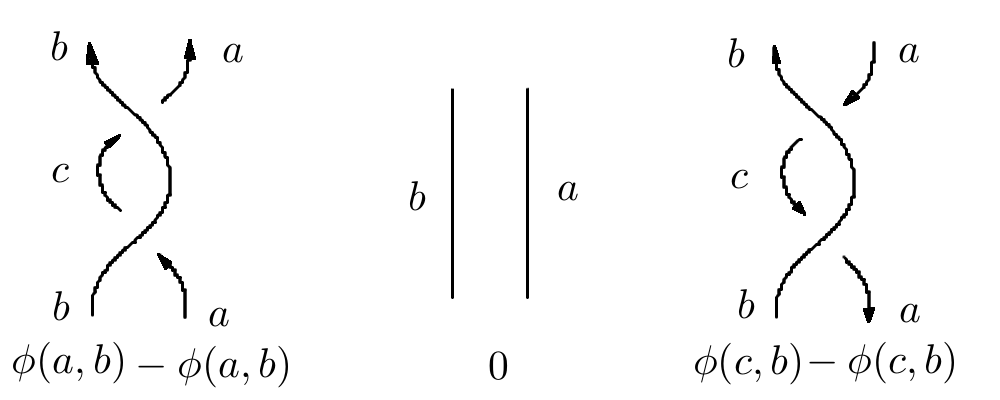}\]

We also note that the weighting rule gives invariance under the
doubled type I moves required for blackboard-framed isotopy:

\[\includegraphics{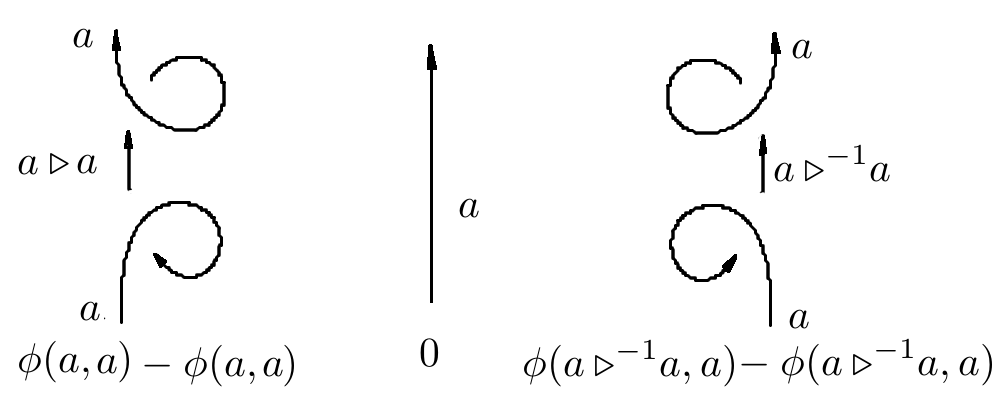}\]

The condition for the sum to be unchanged by Reidemeister III moves is 
pictured below.
\[\includegraphics{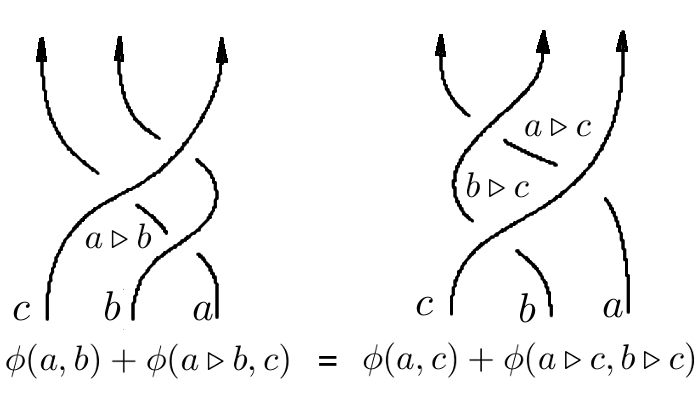}\]
This turns out to be the condition that $\phi$ is a cocycle in the second 
rack cohomology $H^2_R(T;A)$ of the rack $T$ with coefficients in $A$. 
Specifically, the $A$-module spanned by $n$-tuples of elements
of $T$ is the space of \textit{rack} $n$-\textit{chains} $C_n^R(T;A)=A[T^n]$;
its dual is the space of \textit{rack} $n$-\textit{cochains}
$C_R^n(T;A)=\mathrm{Hom}(C_n^R(T;A),A)$. Note that $C^n_R(T;A)$ has 
$A$-generating set $\{\chi_{\mathbf{x}}\ |\ \mathbf{x}\in T^n\}$ where 
\[\chi_{\mathbf{x}}(\mathbf{y}) =\left\{\begin{array}{ll}
1 & \mathbf{x}=\mathbf{y} \\
0 & \mathrm{otherwise}
\end{array}\right.\]
for $\mathbf{y}\in T^n$. Next, we define a coboundary map
$\delta^n:C_R^n(T;A)\to C_R^{n+1}(T;A)$ by
\begin{eqnarray*} (\delta^n\phi)(x_1,\dots,x_{n+1}) 
& = &  
\sum_{i=1}^{n+1} (-1)^{i-1}\left(\phi(x_1,\dots,x_{i-1},x_{i+1},
\dots,x_{n+1})\right. \\ 
& & \quad -\left.\phi(x_1\tr x_i,\dots,x_{i-1}\tr x_i,x_{i+1},
\dots,x_{n+1})\right).
\end{eqnarray*}
Then for $\phi$ to yield a Boltzmann 
weight, we need $\phi\in\mathrm{Ker}(\delta^2).$

To get invariance under the Reidemeister I move in the quandle case, we 
require that $\phi(x,x)=0$ for all $x\in T$. This condition also turns out
to have a homological interpretation -- the cocycles we want to kill live in
a subcomplex called the \textit{degenerate} cochains.
In the non-quandle rack case, however, a weaker condition suffices.

\begin{definition}
\textup{Let $T$ be a finite rack with rack rank $N$, $A$ an abelian group and 
$\phi\in C^2_R(T,F).$ Say $\phi$ is} $N$-reduced {if we have}
\[\sum_{k=1}^{N}\phi(a^{\tr k},a^{\tr k}) =0 \]
\textup{for all $a\in T.$}
\end{definition}

Now we can define an enhanced version of the polynomial rack counting 
invariant:

\begin{definition}
\textup{
Let $L$ be an oriented blackboard-framed link, $T$ a finite rack and $\phi$ an $N$-reduced rack 2-cocycle. For
a rack-colored framed diagram of $L$, $(D,\mathbf{w})$, the \textit{Boltzmann 
weight} $BW(f)$ of
the coloring $f\in \mathrm{Hom}(FR(D,\mathbf{w}),T)$ is the
sum over the crossings in $D$ of the crossing weights,}
\[BW(f)=\sum_{c\ \mathrm{crossing}} \mathrm{sign}(c)\phi(a,b).\] 
\textup{Then the \textit{rack cocycle invariant} of $L$ with respect to $T$ is}
\[\Phi^{\phi}_T(L)= 
\sum_{ \mathbf{w}\in W} \left(
\sum_{ f\in \mathrm{Hom}(FR(D,\mathbf{w}),T)}z^{BW(f)}\right)\]
\textup{and the \textit{writhe-enhanced rack cocycle invariant} is}
\[\Phi^{\phi,W}_T(L)= 
\sum_{ \mathbf{w}\in W} \left(
\sum_{ f\in \mathrm{Hom}(FR(D,\mathbf{w}),T)}z^{BW(f)}\right)q^{\mathbf{w}}.\]
\end{definition}

Note that if $T$ is a quandle, then $N(T)=1$ and $\phi$ is $1$-reduced iff 
$\phi(x,x)=0$ for all degenerate cycles $(x,x)\in C_2^R(T,A)$; in 
this case we also have $W=\{(0,\dots,0)\}$. Indeed,
in the quandle case this rack cocycle invariant becomes the usual CJKLS quandle
2-cocycle invariant from \cite{CJKLS}.

Specializing $z=1$ in $\Phi^{\phi,W}_T(L)$ recovers the writhe-enhanced rack
counting invariant, and likewise in the integral case. To see that the integral
rack cocycle invariant is stronger than $\Phi^{\mathbb{Z}}_T(L)$, consider the 
following simple example.

\begin{example}
\textup{The rack $T$ with rack matrix}
\[M_T=\left[\begin{array}{cccc}
3 & 1 & 3 & 1 \\
2 & 4 & 2 & 4 \\
1 & 3 & 1 & 3 \\
4 & 2 & 4 & 2
\end{array}\right]\]
\textup{has a reduced cocycle 
$\phi=\chi_{(12)}+\chi_{(14)}+\chi_{(32)}+\chi_{(34)}$ with 
$\mathbb{Z}_{13}$ coefficients. Then the $(4,2)$-torus link is distinguished
from the two-component unlink by $\Phi_{\phi}$:}
\[\begin{array}{cc}
\includegraphics{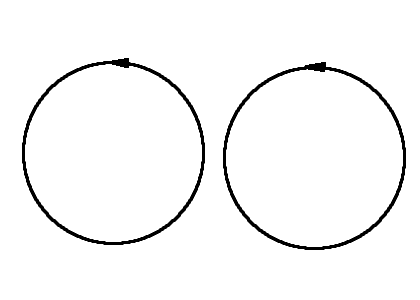} & \includegraphics{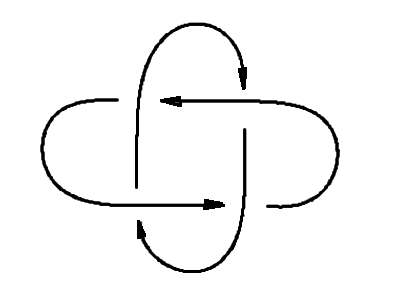} \\
\Phi_{\phi} = 16 & \Phi_{\phi}= 8+ 8z^{12}
\end{array}\] 
\end{example}

Our final example illustrates a pair of virtual links with equal 
writhe-enhanced rack counting invariant values which are distinguished when 
we include the rack cocycle information.

\begin{example}
\textup{Again let $T$ be the rack with rack matrix}
\[M_T=\left[\begin{array}{cccc}
3 & 1 & 3 & 1 \\
2 & 4 & 2 & 4 \\
1 & 3 & 1 & 3 \\
4 & 2 & 4 & 2
\end{array}\right]\] 
\textup{and $\phi=\chi_{(1,2)}+\chi_{(1,4)}
+\chi_{(3,2)}+\chi_{(3,4)}\in C^2_R(T;\mathbb{Z}_{13})$. Then 
$\Phi_{\phi}$ distinguishes the two 
pictured virtual links, both of which have $\Phi^{W}_T(L)=8+8q_1$. Note that 
the subscripts on $q$ correspond to the component ordering.}
\[\begin{array}{cc}
\includegraphics{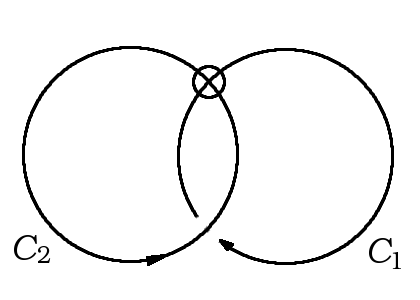} & \includegraphics{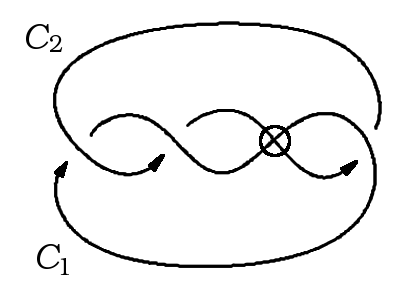} \\
\Phi^{\phi,W}_T = 4+4z+8q_1 & \Phi^{\phi,W}_T= 4z+4z^2+8q_1
\end{array}\]
\end{example}

For more on rack cocycle invariants, see \cite{EN}.

\section{Questions} \label{q}

In this section we collect a few questions for future research.

Rack and quandle (co)homology has been generalized in various ways 
including
\textit{twisted quandle (co)homology} in \cite{CES}, quandle (co)homology
with coefficients in quandle modules in \cite{CEGS} and more.
How does the rack cocycle invariant change in these cases?

Quandle 3-cocycles have been used to enhance quandle counting invariants
of surface knots, i.e. embeddings of compact orientable 2-manifolds in $S^4$.
How do the rack counting and cocycle invariants extend to the surface knot
case?

Other ways of enhancing the quandle counting invariants include using quandle 
polynomials and exploiting any extra structure the quandle may have 
(symplectic vector space, $R$-module, etc.); 
generalizing these ideas to the rack case will be the subject of future
papers. 

Replacing the arcs in the combinatorial motivation for the rack 
axioms with semiarcs yields \textit{biracks}, also known 
as invertible switches or \textit{Yang-Baxter sets} (see \cite{FRS,FJK}). 
The birack analogues of the simple and polynomial rack counting invariants 
will be the subject of another future paper.

\medskip

\texttt{Python} code for computing rack counting invariants, reduced rack 
$2$-cocycles with $\mathbb{Z}_n$ coefficients, and rack cocycle invariants 
is available for download at
\texttt{www.esotericka.org}.


\begin{thebibliography}{10}

\bibitem{CEGS}{J. S. Carter, M. Elhamdadi, M. Gra\~{n}a and M. Saito. 
Cocycle knot invariants from quandle modules and generalized quandle homology.
\textit{Osaka J. Math.}  \textbf{42}  (2005) 499--541.}

\bibitem{CES}{J. S. Carter, M. Elhamdadi and M. Saito. Twisted quandle 
homology theory and cocycle knot invariants.  \textit{Algebr. Geom. Topol.} 
\textbf{2}  (2002) 95--135. }

\bibitem{CJKLS}{J. S. Carter, D. Jelsovsky, S. Kamada, L. Langford and 
M. Saito. Quandle cohomology and state-sum invariants of knotted curves 
and surfaces.  \textit{Trans. Am. Math. Soc.} \textbf{355} (2003) 3947--3989.}

\bibitem{CKS}{J. S. Carter, S. Kamada and M. Saito. Stable equivalence of 
knots on surfaces and virtual knot cobordisms. \textit{J. Knot Theory 
Ramifications}  \textbf{11}  (2002) 311--322.}

\bibitem{EN} M. Elhamdadi and S. Nelson.
$N$-degeneracy in rack homology and link invariants.
\textit{Hiroshima Math. J.} \textbf{42} (2012) 127--142.

\bibitem{FJK}{R. Fenn, M. Jordan-Santana and L. Kauffman. Biquandles 
and virtual links.  \textit{Topology Appl.}  \textbf{145}  (2004) 157--175.}

\bibitem{FR}{R. Fenn and C. Rourke.
 Racks and links in codimension two.
 \textit{J. Knot Theory Ramifications}  \textbf{1}  (1992) 343--406.}

\bibitem{FRS}{R. Fenn, C. Rourke and B. Sanderson. 
Trunks and classifying spaces. 
\textit{Appl. Categ. Structures}  \textbf{3}  (1995) 321--356.}

\bibitem{HN}{B. Ho and S. Nelson. Matrices and finite quandles.
\textit{Homology Homotopy Appl.} \textbf{7} (2005) 197--208.}

\bibitem{J}{D. Joyce.
 A classifying invariant of knots, the knot quandle.
 \textit{J. Pure Appl. Algebra}  \textbf{23}  (1982)  37--65.}

\bibitem{K}{L. Kauffman. Virtual Knot Theory. \textit{European J. Combin.}
\textbf{20} (1999) 663--690.}

\bibitem{KR}{L. H. Kauffman and D. Radford. Bi-oriented quantum algebras, 
and a generalized Alexander polynomial for virtual links. 
\textit{Contemp. Math}. \textbf{318} (2003) 113--140.}

\bibitem{LR}{P. Lopes and D. Roseman. On finite racks and quandles. 
\textit{Comm. Algebra} \textbf{34}  (2006) 371--406. }

\bibitem{M}{S. V. Matveev.
Distributive groupoids in knot theory. 
\textit{Math. USSR, Sb.} \textbf{47} (1984) 73--83.}


\bibitem{T}{B. Trace. On the Reidemeister moves of a classical knot.
\textit{Proc. Amer. Math. Soc.} \textbf{89} (1983) 722--724.}


\end{thebibliography}
\end{document}